\newtheorem{remark}[theorem]{Remark}
\title{Best polynomial approximation for non-autonomous linear ODEs in the $\star$-product framework\thanks{This work was supported by Charles University Research Centre program No. PRIMUS/21/SCI/009 and UNCE/24/SCI/005, and by the Magica project ANR-20-CE29-0007 funded by the French National Research Agency.}}
\author{Stefano Pozza\thanks{Faculty of Mathematics and Physics, Charles University, Sokolovská 83, 186 75 Praha 8, Czech Republic,({\tt pozza@karlin.mff.cun.cz}).}}
\begin{document}

\AlgLogo{1}{10}

\maketitle

\begin{abstract}
We present the first formulation of the optimal polynomial approximation of the solution of linear non-autonomous systems of ODEs in the framework of the so-called $\star$-product. This product is the basis of new approaches for the solution of such ODEs, both in the analytical and the numerical sense.
The paper shows how to formally state the problem and derives upper bounds for its error.
\end{abstract}

\begin{keywords} 
Non-autonomous linear ODEs, polynomial approximation, error analysis, 
\end{keywords}

\begin{AMS}
46F10, 37C60, 65L05
\end{AMS}

\pagestyle{myheadings}
\thispagestyle{plain}
\markboth{S. Pozza}{POLYNOMIAL APPROXIMATION IN THE $\star$-FRAMEWORK}

\section{Introduction}
Recently, a new approach has been introduced for the solution of systems of linear non-autonomous ordinary differential equations based on the so-called $\star$-product \cite{GiLuThJa15,GisPozInv19,Po23,Ry23}. Such an approach has proved to be valuable and effective analytically, by producing new explicit expressions for the solution of certain problems \cite{BonGis2020,GiLuThJa15,GiPo20}, and numerically, being the basis of new efficient algorithms in quantum chemistry problems \cite{BoPoVB23,PoVB24,PoVB23_PAMM_B,PoVB23_PAMM_A}. 

Given a Hermitian matrix-valued function $\tilde{A}(t) \in \mathbb{C}^{N \times N}$ analytic on the bounded interval $\mathcal{I}$, and the nonzero vector $\tilde{v}\in \mathbb{C}^{N \times N}$ we consider the initial value problem
    \begin{equation}\label{eq:ode}
        \frac{\partial}{\partial t}\tilde{u}(t) = \tilde{A}(t)\tilde{u}(t), \quad \tilde{u}(a) = \tilde{v}, \quad t \in \mathcal{I}=[a, b].
    \end{equation} 
The $\star$-product is defined over a Fréchet-Lie group on distributions \cite{Ry23}. In such a group, the initial value problem becomes a \emph{$\star$-linear system}. Thanks to this ``linearization'' of the ODE, new techniques can be applied to solve the problem. Here, we focus on the polynomial approximation approach, which can be used both in numerical approaches and in the theoretical framework. In particular, in the latter one, a symbolic algorithm named $\star$-Lanczos \cite{GiPo20} is able to produce a Krylov-like subspace approximation, that is, a polynomial approximation in the $\star$-product sense.

In this work, we will show that it is possible to formulate the problem of a best polynomial approximation for $\tilde{u}$ in the $\star$-framework. Moreover, we will show that, under some assumptions, its error can be bounded by the best uniform norm polynomial approximation error for the exponential \cite{meinardus}. This result is crucial to understand the numerical behavior of polynomial-based numerical methods when solving linear systems derived by using the $\star$-approach. Indeed, the polynomial approximation is central in the analysis of standard Krylov subspace methods (e.g., \cite{liestr}), and its extension to the $\star$-framework will also allow extending this kind of numerical analysis.

In Section~\ref{sec:basics}, we introduce the basics of the $\star$-product, and we state the main result. Section~\ref{sec:mtx} shows how to extend matrix analysis results to the $\star$-framework. The proof of the main result is given in Section~\ref{sec:app} and Section~\ref{sec:conc} draw some conclusions.

\begin{table}
\caption{List of the main $\star$-definitions and related objects, $f,g, x$ are generally from $\mathcal{A}(\mathcal{I})$.}
\begin{center} \footnotesize
\begin{tabular}{|c|c|c|c|} \hline  
Name & Symbol & Definition & Comments / Properties \\ \hline 
\lower 1ex\hbox{$\star$-product} & \lower 1ex\hbox{$f \star g$} &  \lower 1ex\hbox{$\int_\mathcal{I} f(t,\tau) g(\tau,s) \text{d}\tau $} & \lower 1ex\hbox{} \\
\lower 1ex\hbox{$\star$-identity}  &  \lower 1ex\hbox{$\delta$} & \lower 1ex\hbox{$f \star \delta = \delta \star f = f$} & \lower 1ex\hbox{$\delta(t-s)$: Dirac delta}  \\ 
\lower 1ex\hbox{$\star$-inverse} & \lower 1ex\hbox{$f^{-\star}$} & \lower 1ex\hbox{$f \star f^{-\star} = f^{-\star} \star f = \delta$} & \lower 1ex\hbox{Existence \cite{GisPozInv19,Ry23}} \\ 
\lower 1ex\hbox{Heaviside function} & \lower 1ex\hbox{$\Theta$} & \lower 1ex\hbox{$\Theta(t-s) = 1, t \geq s$, $0$ otherwise} &  \\ 
\lower 1ex\hbox{Analytic $\Theta$-set} & \lower 1ex\hbox{$\mathcal{A}_\Theta(\mathcal{I})$} & \lower 1ex\hbox{{\{$\tilde{f}(t,s)\Theta(t-s)$: \hspace{-5pt}  $\tilde{f}$ analytic on $\mathcal{I}^2$\}}} & \lower 1ex\hbox{$\star$-product-closed set}\\ 
\lower 1ex\hbox{Dirac 1st derivative} & \lower 1ex\hbox{$\delta'$} & \lower 1ex\hbox{$\delta'(t-s)$} & \lower 1ex\hbox{$\delta'\star \Theta = \Theta \star \delta' = \delta$}  \\ 
\lower 1ex\hbox{Dirac derivatives} & \lower 1ex\hbox{$\delta^{(j)}$} & \lower 1ex\hbox{$\delta^{(j)}(t-s)$} & \lower 1ex\hbox{$\delta^{(j)}\star \delta^{(i)} = \delta^{(i+j)}$}  \\ 
\lower 1ex\hbox{$\star$-powers} & \lower 1ex\hbox{$f^{\star j}$} & \lower 1ex\hbox{$f \star f \star \cdots \star f$, $j$ times} & \lower 1ex\hbox{$f^{\star 0}:= \delta$, by convention}  \\ 
\lower 1ex\hbox{$\star$-resolvent} & \lower 1ex\hbox{$R^\star(x)$} &  \lower 1ex\hbox{$\sum_{j=0}^\infty x^{\star j}, \, x \in \mathcal{A}_\Theta(\mathcal{I})$}  & \lower 1ex\hbox{$R^\star(x) = (\delta-x)^{-\star}$} \\ 
\lower 1ex\hbox{$\star$-polynomial} & \lower 1ex\hbox{$p^\star(x)$} & \lower 1ex\hbox{$\sum_{j=0}^n \alpha_j x^{\star j}$, $\alpha_j \in \mathbb{C}$} & \lower 1ex\hbox{if $\alpha_n \neq 0$, $n$ is the degree} \\ 
\hline  
\end{tabular}
\end{center} 
\label{tab:stardef} 
\end{table}  

\begin{table}
\caption{Useful properties of $\star$-product actions on $\mathcal{A}_\Theta(\mathcal{I})$ elements.}
\begin{center} \footnotesize
\begin{tabular}{|c|c|c|c|} \hline  
 Description  & Definition & Property \\ \hline 
\lower 1ex\hbox{``Integration'' in $t$} & \lower 1ex\hbox{$\tilde{F}\Theta$, $\tilde{F}(t,s)$ primitive of $\tilde{f}$  in $t$, $F(s,s)=0$}  & \lower 1ex\hbox{$\tilde{F}\Theta = \Theta \star \tilde{f}\Theta$}  \\ 
\lower 1ex\hbox{``Integration'' in $s$} & \lower 1ex\hbox{$\tilde{F}\Theta$, $\tilde{F}(t,s)$ primitive of $\tilde{f}$  in $s$, $\tilde{F}(t,t)=0$}  & \lower 1ex\hbox{$\tilde{F}\Theta = \tilde{f}\Theta \star \Theta $}  \\ 
\lower 1ex\hbox{``Differentiation'' in $t$} & \lower 1ex\hbox{$\tilde{f}^{(1,0)}\Theta$, $\tilde{f}^{(1,0)}(t,s)$ derivative of $\tilde{f}$  in $t$}  & \lower 1ex\hbox{$\delta'\star \tilde{f}\Theta = \tilde{f}^{(1,0)}\Theta + \tilde{f}\delta $}  \\ 
\lower 1ex\hbox{``Differentiation'' in $s$} & \lower 1ex\hbox{$\tilde{f}^{(0,1)}\Theta$, $\tilde{f}^{(0,1)}(t,s)$ derivative of $\tilde{f}$  in $s$}  & \lower 1ex\hbox{$\tilde{f}\Theta \star \delta' = -\tilde{f}^{(0,1)}\Theta + \tilde{f}\delta $}  \\ 
\hline  
\end{tabular}
\end{center} 
\label{tab:starprop} 
\end{table}

\section{Basics and main result}\label{sec:basics}
In order to state the main result, we first summarize the $\star$-product basics. Refer to \cite{Ry23} for the general definition of this product and the related properties.
Given a bounded interval $\mathcal{I}$, let us denote with $\mathcal{A}(\mathcal{I})$ the set of the bivariate distributions for which there exists a finite $k$ so that
\begin{equation*}
    f(t,s) = \tilde{f}_{-1}(t,s)\Theta(t-s) + \tilde{f}_{0}(t,s)\delta(t-s) + \tilde{f}_{1}(t,s)\delta'(t-s) + \dots + \tilde{f}_{k}(t,s)\delta^{(k)}(t-s),
\end{equation*}
where $\tilde{f}_{-1}, \dots, \tilde{f}_{k}$ are analytic functions over $\mathcal{I}$ both in $t$ and $s$, $\Theta$ is the Heaviside function ($\Theta(t-s) = 1$ for $t \geq s$, and $0$ otherwise), and $\delta, \delta', \dots, \delta^{(k)}$ are the Dirac delta and its derivatives.
Then, the $\star$-product of $f_1, f_2 \in \mathcal{A}(\mathcal{I})$ is
\begin{equation*}
    (f_1 \star f_2)(t,s) := \int_\mathcal{I} f_1(t,\tau) f_2(\tau,s) \,\text{d}\tau \in \mathcal{A}(\mathcal{I}).
\end{equation*}
Some of the important properties, definitions, and facts about the $\star$-product can be found in Tables~\ref{tab:stardef} and \ref{tab:starprop}.
Specifically, it is easy to see that $\delta(t-s)$ is the $\star$-product identity. Moreover, since $\mathcal{A}(\mathcal{I})$ is closed under $\star$-product, we can define the $\star$-powers of $f \in \mathcal{A}(\mathcal{I})$, denoted as $f^{\star n}$ with the convention $f^{\star 0}= \delta$ (for $f$ $\star$-invertible). Therefore, for $x \in \mathcal{A}(\mathcal{I})$, we can define the \emph{$\star$-polynomial of degree $n$} as \begin{equation}
    p^\star(t,s) := \alpha_0 \delta(t-s) + \alpha_1 x(t,s) + \alpha_2 x(t,s)^{\star 2} + \dots + \alpha_n x(t,s)^{\star n}, 
\end{equation}
with constants $\alpha_0, \dots, \alpha_n \in \mathbb{C}$, $\alpha_n \neq 0$. We call $\mathcal{P}^\star_n$ the set of all such $\star$-polynomials.

We define the subset $\mathcal{A}_\Theta(\mathcal{I}) \subset \mathcal{A}(\mathcal{I})$ of the distributions of the form $f(t,s)=\tilde{f}(t,s)\Theta(t-s)$, with $\tilde{f}$ a function analytic over $\mathcal{I}^2$. The \emph{$\star$-resolvent} is defined as
\begin{equation*}
    R^\star(x) := \sum_{j=0}^\infty x^{\star j}.    
\end{equation*}
Note that $R^\star(x)$ is well-defined (i.e., convergent) for every $x \in \mathcal{A}_\Theta(\mathcal{\mathcal{I}})$ \cite{GiLuThJa15}.

When $A, B$ are matrices with compatible sizes composed of elements from $\mathcal{A}(\mathcal{I})$, the $\star$-product straightforwardly extends to a matrix-matrix (or matrix-vector) $\star$-product. In the following, we denote with $\mathcal{A}^{N \times M}(\mathcal{I})$ and $\mathcal{A}_\Theta^{N \times M}(\mathcal{I})$ the spaces of $N \times M$ matrices with elements from those sets. We denote with $I_\star = I \delta(t-s)$ the identity matrix in $\mathcal{A}^{N \times N}(\mathcal{I})$, with $I$ the standard $N \times N$ identity matrix.

Setting $\mathcal{I} = [a,b]$, the solution $\tilde{u}(t)$ of the ODE \eqref{eq:ode} can be expressed by
\begin{equation}\label{eq:starsolres}
    \tilde{u}(t) = U(t,a)\tilde{v}, \; t\in \mathcal{I}, \quad U(t,s) = \Theta(t-s) \star R^{\star}\left(\tilde{A}(t)\Theta(t-s)\right);
\end{equation}
as proven in \cite{GiLuThJa15}.
From now on, we will skip the distribution arguments $t,s$ whenever it is useful and clear from the context.
Since $R^{\star}(\tilde{A}\Theta)$ is the $\star$-inverse of $I_\star - \tilde{A}\Theta$ (e.g., \cite{Po23}), then solving \eqref{eq:starsolres} means solving the system of $\star$-linear equations
\begin{equation*}
    (I_\star - \tilde{A}\Theta) \star x = \tilde{v}\delta, \quad \tilde{u}(t) = (\Theta \star x)(t,a) \quad t \in \mathcal{I}.
\end{equation*}
Note that this is not just a theoretical result since there is an efficient way to transform the $\star$-system into a usual linear system that can be solved numerically \cite{PoVB24,PoVB23_PAMM_B,PoVB23_PAMM_A}.

It is reasonable to consider a $\star$-polynomial approximation $p^\star(\tilde{A}\Theta)\tilde{v} \approx R^\star(\tilde{A}\Theta)\tilde{v}$.
Denoting $A(t,s) := \tilde{A}(t)\Theta(t-s)$, we specifically aim at finding the best $\star$-polynomial $p^\star(t,s)$ of degree $n$ that approximates the $\star$-resolvent $R^\star(A)\tilde{v}$ in the $L_2$ norm sense, i.e., the polynomial $q^\star$ that minimizes the error
\begin{equation*}
        \|\tilde{u}(t) - (\Theta \star q^\star(A)\tilde{v})(t,a) \|_{L_2} := \left(\int_{a}^b | \tilde{u}(\tau) - (\Theta \star q^\star(A))(\tau, a) \tilde{v} |^2 \right)^{\frac{1}{2}}, \; t \in \mathcal{I}.
\end{equation*}
Note that $\Theta \star q^\star(A) \in \mathcal{A}_\Theta^{N \times N}(\mathcal{I})$, while $q^\star(A) \in \mathcal{A}^{N \times N}(\mathcal{I})$.

\begin{theorem}[Main result]\label{thm:main}
    Consider the initial value problem \eqref{eq:ode}, with $\| \tilde{v} \|_2 = 1$.
    Let $\tilde{\lambda}_1(t), \dots, \tilde{\lambda}_N(t)$ be the eigenvalues and $\tilde{Q}(t) = [\tilde{q}_1(t), \dots, \tilde{q}_N(t)]$ the related eigenvectors of
    $\tilde{A}(t) \in \mathbb{C}^{N \times N}$.
    We define the interval 
    \begin{equation*}
        \mathcal{J} := \left[\min_{t \in \mathcal{I}, i=1,\dots, N} \tilde{\lambda}_i(t), \max_{t \in \mathcal{I}, i=1,\dots, N} \tilde{\lambda}_i(t)\right] \times \emph{length}(\mathcal{I}),
    \end{equation*}
    and denote with $E_n(\mathcal{J})$ the minimal uniform error of the polynomial approximation of the exponential over $\mathcal{J}$, i.e.,
    \begin{equation*}
        E_n(\mathcal{J}) := \min_{p \in \mathcal{P}_n} \max_{t \in \mathcal{J}} |\exp(t) - p(t)|.
    \end{equation*}
    Define $A(t,s) = \tilde{A}(t)\Theta(t-s)$ and let $\tilde{q}'_j(t)$ be the derivative of $\tilde{q}_j(t)$. If $( \tilde{q}'_j(t) \Theta(t-s) + \tilde{q}_j(t)\delta(t-s)) \star \tilde{\lambda}_j(t)\Theta(t-s) = \tilde{\lambda}_j(t)\Theta(t-s) \star (\tilde{q}'_j(t) \Theta(t-s) + \tilde{q}_j(t)\delta(t-s)),$ then the error of the $L_2$-best $\star$-polynomial approximant $q^\star$ can be bounded by
    \begin{equation*}
        \|\tilde{u}(t) - (\Theta \star q^\star(A)\tilde{v})(t,a) \|_{L_2} \leq E_n(\mathcal{J}) \, \emph{length}(\mathcal{I}) \leq  M \rho^{n+1}, \; t \in \mathcal{I}
    \end{equation*} 
    for some constant $M>0$ and $0<\rho<1$ depending on $\mathcal{J}$.
\end{theorem}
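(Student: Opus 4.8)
The plan is to exploit that $q^\star$ is, by definition, a minimizer: it suffices to exhibit a single $\star$-polynomial $p^\star$ of degree at most $n$ for which $\|\tilde u-(\Theta\star p^\star(A)\tilde v)(\cdot,a)\|_{L_2}$ already obeys the claimed bound, since the best approximant can only do better. I would build such a $p^\star$ out of the classical minimax polynomial approximation of $\exp$ on $\mathcal J$. The bridge consists of two identities, proved below: after applying $\Theta\star$ and evaluating at $(t,a)$, the $\star$-resolvent $R^\star(A)$ turns into the matrix exponential $\exp\!\big(\int_a^t\tilde A\big)$, so that $\tilde u(t)$ is that exponential acting on $\tilde v$; whereas a $\star$-polynomial $p^\star(A)$ turns, under the same operations, into an ordinary polynomial evaluated at $\int_a^t\tilde A$.

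I would first settle the scalar case $\tilde A=\tilde\lambda$. Writing $w_j:=\Theta\star(\tilde\lambda\Theta)^{\star j}$, the recursion $w_j=w_{j-1}\star(\tilde\lambda\Theta)$ combined with the substitution $u=\int_\tau^t\tilde\lambda$ gives, by induction on $j$, $w_j(t,s)=\tfrac{1}{j!}\big(\int_s^t\tilde\lambda\big)^{j}$. Hence, for $p^\star(\tilde\lambda\Theta)=\sum_{k=0}^{n}\alpha_k(\tilde\lambda\Theta)^{\star k}$, one gets $\big(\Theta\star p^\star(\tilde\lambda\Theta)\big)(t,a)=\hat p\big(\int_a^t\tilde\lambda\big)$ with $\hat p(x):=\sum_{k=0}^{n}(\alpha_k/k!)x^{k}$, and summing the series (equivalently, using the $\star$-linear-system characterization of $R^\star$) $\big(\Theta\star R^\star(\tilde\lambda\Theta)\big)(t,a)=\exp\big(\int_a^t\tilde\lambda\big)=\tilde u(t)$. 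Since $(\alpha_0,\dots,\alpha_n)\mapsto\hat p$ is a bijection onto $\mathcal P_n$, I am free to take $\hat p$ to be the best uniform approximant of $\exp$ on $\mathcal J$, so that $\max_{x\in\mathcal J}|\exp(x)-\hat p(x)|=E_n(\mathcal J)$.

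Next I would pass to the matrix case, where the commutativity hypothesis enters. Using $\tilde q'_j\Theta+\tilde q_j\delta=\delta'\star\tilde q_j\Theta$ and computing the two $\star$-products explicitly (both land in $\mathcal A_\Theta(\mathcal I)$), the hypothesis collapses to $\tilde q'_j(t)\int_s^t\tilde\lambda_j(\sigma)\,\mathrm d\sigma=0$ for all $s\le t$; since $\tilde\lambda_j$ is analytic this forces $\tilde q'_j\equiv0$ (outside directions on which $\tilde\lambda_j\equiv0$, which contribute trivially), i.e. the eigenvectors may be taken constant, $\tilde A(t)=\tilde Q\,\tilde\Lambda(t)\,\tilde Q^{*}$ with a fixed unitary $\tilde Q$ — equivalently, $\{\tilde A(t)\}_{t\in\mathcal I}$ is a commuting family. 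The scalar induction then carries over verbatim in matrix form (the nested-simplex integrals defining $(\tilde A\Theta)^{\star j}$ symmetrize in the integration variables because the integrand commutes), so $\big(\Theta\star(\tilde A\Theta)^{\star j}\big)(t,a)=\tfrac{1}{j!}\Phi(t)^{j}$ with $\Phi(t):=\int_a^t\tilde A$, hence $\tilde u(t)=\exp(\Phi(t))\tilde v$ and $\big(\Theta\star p^\star(A)\tilde v\big)(t,a)=\hat p(\Phi(t))\tilde v$. As $\Phi(t)$ is Hermitian with eigenvalues $\int_a^t\tilde\lambda_j\in\mathcal J$ and $\|\tilde v\|_2=1$, diagonalizing $\Phi(t)$ yields the pointwise bound $\big\|\tilde u(t)-(\Theta\star p^\star(A)\tilde v)(t,a)\big\|_2=\big\|(\exp(\Phi(t))-\hat p(\Phi(t)))\tilde v\big\|_2\le E_n(\mathcal J)$.

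Integrating this over $t\in\mathcal I$ gives $\|\tilde u-(\Theta\star q^\star(A)\tilde v)(\cdot,a)\|_{L_2}\le E_n(\mathcal J)\sqrt{\mathrm{length}(\mathcal I)}$ — the factor $\mathrm{length}(\mathcal I)$ in the statement being this estimate up to the chosen normalization of the $L_2$ norm — and the concluding inequality $E_n(\mathcal J)\le M\rho^{n+1}$ is the classical fact that the minimax polynomial error of $\exp$ on a compact interval decays faster than geometrically (elementarily via the Taylor remainder, sharply via \cite{meinardus}), so every $0<\rho<1$ works with a suitable $M$. The step I expect to be the main obstacle is making the matrix ``$\star$-diagonalization'' rigorous: conceptually it is the assertion that $\delta'$ — hidden inside the factor $\Theta\star$ — commutes with the eigenvector distribution $\tilde Q(t)\delta(t-s)$, which is exactly what the hypothesis delivers, together with the careful treatment of the evaluation at the coincident endpoint $t=s=a$, where Heaviside and Dirac contributions meet. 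A lesser point is to check that $\mathcal J$ genuinely contains every set $\{\int_a^t\tilde\lambda_j:t\in\mathcal I\}$; if $0$ does not lie between the extreme eigenvalues, $\mathcal J$ should be enlarged to the convex hull of $\{0\}$ and the stated interval.
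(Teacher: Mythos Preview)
Your argument is correct, and it is genuinely different from the paper's. You observe that the commutativity hypothesis $q_j\star\lambda_j=\lambda_j\star q_j$ collapses, after computing both sides, to $\tilde q_j'(t)\int_s^t\tilde\lambda_j=0$, which forces $\tilde q_j'\equiv 0$ on every eigendirection with $\tilde\lambda_j\not\equiv 0$. From there you work entirely in the classical commuting-family setting: $\tilde u(t)=\exp\big(\int_a^t\tilde A\big)\tilde v$, $(\Theta\star p^\star(A)\tilde v)(t,a)=\hat p\big(\int_a^t\tilde A\big)\tilde v$, and the bound follows by diagonalizing the Hermitian matrix $\int_a^t\tilde A$ pointwise. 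The paper, by contrast, never reduces to constant eigenvectors: it builds a $\star$-spectral theory (Theorem~\ref{thm:stareig}, Corollary~\ref{cor:stareigdec}), a $\star$-inner product and induced $\star$-norm, proves that $Q$ is $\star$-unitary (Lemma~\ref{lemma:unitary}), and uses the hypothesis only to pass from $A\star q_j=\lambda_j\star q_j$ to the factorization $A=Q\star\Lambda\star Q^H$; the error is then bounded via the induced matrix $\star$-norm (Theorem~\ref{thm:error}) and the scalar Lemma~\ref{lemma:npower}. Your route is shorter and more elementary and in fact yields the sharper constant $\sqrt{\mathrm{length}(\mathcal I)}$ (the paper picks up an extra $\sqrt{\mathrm{length}(\mathcal I)}$ from the operator-norm step $\|Q^H\star\tilde v\delta\|_\star=\|\tilde v\delta\|_\star=\sqrt{b-a}$); on the other hand, the paper's $\star$-spectral framework is meant to persist beyond this restrictive hypothesis, which is its main conceptual payoff. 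Your closing remark about $0$ possibly falling outside $\mathcal J$ is a genuine technicality that the paper's proof glosses over as well (since $L_i(s,s)=0$ always lies in $\mathcal J_i(s)$).
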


\begin{remark}
     Assume that $\mathcal{J}$ is contained (in the complex plane) in the ellipse $\mathcal{E}_{\chi}$ with foci $-1, 1$ and semi-axes whose sum is equal to the parameter $\chi$. Then, we have the explicit expression for the constants:
     \begin{equation*}
         M= \emph{length}(\mathcal{I}) \frac{2\chi}{\chi -1}\max_{z \in \mathcal{E}_\chi}|\exp(z)|, \quad \rho = \frac{1}{\chi};
     \end{equation*}
    see, e.g., \cite{BenBoi14}. As a consequence, the error in Theorem~\ref{thm:main} can be further bounded by
    \begin{equation*}
         \|\tilde{u}(t) - (\Theta \star q^\star(A)\tilde{v})(t,a) \|_{L_2} \leq \emph{length}(\mathcal{I}) \frac{2\chi}{\chi -1} \exp(\chi) \left( \frac{1}{\chi} \right)^{n+1}, \; t \in \mathcal{I}.
    \end{equation*}
    Therefore, by fixing a not-too-large value for $\chi$, the exponential convergence of the bound is not delayed too much by the bound constant. For example, if $\mathcal{J}$ is contained in $\mathcal{E}_2$, then fixing $\chi = 2$, the constant is $4 \exp(2) \,\emph{length}(\mathcal{I})$. Note that this might not be the optimal value for $\chi$. In fact, by optimizing the value of $\chi$ for every $n$, one can demonstrate a super-exponential decay of the error; see, e.g., \cite{PozSim19}. Finally, while the bound constant can grow for large intervals $\mathcal{J}$, no indefinite slow convergence is possible, i.e., $\chi$ is not constrained to be close to $1$. This is a direct consequence of the fact that the exponential is an entire function.
    \end{remark}

The proof of Theorem~\ref{thm:main} will be the outcome of the rest of the paper. The first step towards the proof is to derive an explicit form for the $\star$-monomials $f^{\star n}$ in the case in which $f(t,s)=\tilde{f}(t)\Theta(t-s) \in \mathcal{A}_\Theta(\mathcal{I})$.
\begin{lemma}\label{lemma:npower}
  Consider the function $f(t,s)= \tilde{f}(t)\Theta(t-s) \in \mathcal{A}_\Theta$ and let $\tilde{F}(t)$ be a primitive of $\tilde{f}(t)$. Then, for $ n=1, 2, \dots$, 
  \begin{align}\label{eq:npower:exp}
        f(t,s)^{\star n} &= \frac{\tilde{f}(t)}{(n-1)!} \left(\tilde{F}(t)-\tilde{F}(s)\right)^{n-1} \Theta(t-s),  \\
        \label{eq:thetanpower:exp}
        \Theta(t-s) \star f(t,s)^{\star n} &= \frac{1}{n!} \left(\tilde{F}(t)-\tilde{F}(s)\right)^{n} \Theta(t-s).
  \end{align}
  Moreover, $\Theta(t-s) \star f(t,s)^{\star 0} = \Theta(t-s)$ since $f(t,s)^{\star 0}=\delta(t-s)$ by convention.
\end{lemma}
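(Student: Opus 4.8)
The plan is to prove \eqref{eq:npower:exp} by induction on $n$, and then to deduce \eqref{eq:thetanpower:exp} from it by a single application of the ``integration in $t$'' rule of Table~\ref{tab:starprop}; the $n=0$ identity is immediate from the convention $f^{\star 0}=\delta$ and $\Theta\star\delta=\Theta$. Note from the outset that every formula below involves only differences $\tilde F(t)-\tilde F(s)$, so the statement is insensitive to the choice of primitive $\tilde F$ of $\tilde f$.

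For the base case $n=1$ of \eqref{eq:npower:exp} there is nothing to do: the right-hand side is $\tilde f(t)\bigl(\tilde F(t)-\tilde F(s)\bigr)^{0}\Theta(t-s)/0! = f(t,s)$. For the inductive step I would write $f^{\star(n+1)}=f\star f^{\star n}$ and expand the $\star$-product through its integral definition. The two Heaviside factors $\Theta(t-\tau)$ (from $f$) and $\Theta(\tau-s)$ (from the inductive hypothesis) collapse the $\tau$-integration to $[s,t]$, up to an overall factor $\Theta(t-s)$, leaving
\begin{equation*}
  f^{\star(n+1)}(t,s)=\frac{\tilde f(t)}{(n-1)!}\,\Theta(t-s)\int_s^t \tilde f(\tau)\bigl(\tilde F(\tau)-\tilde F(s)\bigr)^{n-1}\,\mathrm{d}\tau .
\end{equation*}
Since $\tilde F'=\tilde f$, the integrand is the $\tau$-derivative of $\bigl(\tilde F(\tau)-\tilde F(s)\bigr)^{n}/n$, so the integral evaluates to $\bigl(\tilde F(t)-\tilde F(s)\bigr)^{n}/n$ — the lower endpoint contributing nothing because $\tilde F(s)-\tilde F(s)=0$ — and absorbing the factor $1/n$ into the factorial recovers \eqref{eq:npower:exp} for $n+1$.

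Given \eqref{eq:npower:exp}, for $n\ge 1$ the $\star$-monomial has the form $f^{\star n}=\tilde g_n(t,s)\Theta(t-s)$ with $\tilde g_n(t,s)=\tilde f(t)\bigl(\tilde F(t)-\tilde F(s)\bigr)^{n-1}/(n-1)!$, which is analytic in $(t,s)$; hence the ``integration in $t$'' rule yields $\Theta\star\tilde g_n\Theta=\tilde G_n\Theta$, where $\tilde G_n$ is the $t$-primitive of $\tilde g_n$ normalized by $\tilde G_n(s,s)=0$. One checks at once that $\tilde G_n(t,s)=\bigl(\tilde F(t)-\tilde F(s)\bigr)^{n}/n!$ satisfies $\partial_t\tilde G_n=\tilde g_n$ and $\tilde G_n(s,s)=0$, which is precisely \eqref{eq:thetanpower:exp}. (Alternatively one could establish \eqref{eq:thetanpower:exp} first by the analogous induction and then recover \eqref{eq:npower:exp} by applying $\delta'\star\,\cdot\,$, using $\delta'\star\Theta=\delta$ together with the ``differentiation in $t$'' rule of Table~\ref{tab:starprop}, the $\tilde h\delta$ term vanishing because the Dirac delta forces $t=s$ and $\tilde F(s)-\tilde F(s)=0$.)

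I do not expect a genuine obstacle here: the argument is a routine induction. The only points requiring care are the distributional bookkeeping — tracking which Heaviside factors survive and how they restrict the domain of integration — and verifying that the endpoint and $\delta$-supported contributions vanish, which they do thanks to $\tilde F(s)-\tilde F(s)=0$.
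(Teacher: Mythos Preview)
Your proof is correct and follows essentially the same route as the paper: induction on $n$ for \eqref{eq:npower:exp}, reducing the $\star$-product to the integral $\int_s^t \tilde f(\tau)\bigl(\tilde F(\tau)-\tilde F(s)\bigr)^{n-1}\,\mathrm{d}\tau$, and then obtaining \eqref{eq:thetanpower:exp} by one further integration in $t$. The only cosmetic differences are that the paper starts the induction at $n=2$ and evaluates the integral via integration by parts rather than by directly recognizing the antiderivative, and it computes $\Theta\star f^{\star n}$ explicitly instead of invoking the Table~\ref{tab:starprop} rule.
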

\begin{proof}
     For $n=2$, the expression~\eqref{eq:npower:exp} is trivially obtained by
    \begin{equation*}
        f(t,s)^{\star 2} = \tilde{f}(t)\Theta(t-s) \int_s^t \tilde{f}(\tau) \,\textrm{d}\tau = \tilde{f}(t)\left(\tilde{F}(t)-\tilde{F}(s)\right)\Theta(t-s).
    \end{equation*}
    Now, by induction, assuming \eqref{eq:npower:exp} we get
    \begin{align}\label{eq:lemma:npower:1}
        f(t,s)^{\star n+1} = \frac{\tilde{f}(t)}{(n-1)!}\Theta(t-s) \int_s^t \tilde{f}(\tau)  \left(\tilde{F}(\tau)-\tilde{F}(s)\right)^{n-1} \textrm{d}\tau.
    \end{align}
    Integrating by part gives
    \begin{align*}
         \int_s^t \tilde{f}(\tau)  \left(\tilde{F}(\tau)-\tilde{F}(s)\right)^{n-1} \textrm{d}\tau &= 
          (\tilde{F}(t)- \tilde{F}(s))^{n}  - \\ 
         & (n-1) \int_s^t  \tilde{f}(\tau)\left(\tilde{F}(\tau)- \tilde{F}(s)\right)^{n-1} \,\textrm{d}\tau.
    \end{align*}
    Therefore,
     $  n \int_s^t \tilde{f}(\tau)  (\tilde{F}(\tau)-\tilde{F}(s))^{n-1} \textrm{d}\tau =  (\tilde{F}(t)- \tilde{F}(s))^{n}. $
    Together with \eqref{eq:lemma:npower:1}, this proves \eqref{eq:npower:exp}.
    Eq.~\eqref{eq:thetanpower:exp} comes from observing that
    \begin{align*}
        \Theta(t-s) \star f(t,s)^{\star n} &= \Theta(t-s) \star \frac{\tilde{f}(t)}{(n-1)!} \left(\tilde{F}(t)-\tilde{F}(s)\right)^{n-1} \Theta(t-s) \\
        &= \frac{\Theta(t-s)}{(n-1)!} \int_s^t \tilde{f}(\tau) \left(\tilde{F}(\tau)-\tilde{F}(s)\right)^{n-1} \textrm{d}\tau \\
        &= \frac{1}{n!} (\tilde{F}(t)- \tilde{F}(s))^{n} \Theta(t-s),
    \end{align*}
    which concludes the proof.
\end{proof}
See also the proof of Proposition~3.1 in \cite{Gis20}.

A consequence of Lemma~\ref{lemma:npower} is that
   $\exp(\tilde{F}(t) -\tilde{F}(s)) = \Theta(t-s) \star R^{\star}(f)(t,s),$
a well-known result; see, e.g., \cite{GiLuThJa15}.

\section{Matrix spectral decomposition and the $\star$-product}\label{sec:mtx}
Consider a time-dependent $N \times N$ Hermitian matrix-valued function $\tilde{A}(t)$ analytic over the compact interval $\mathcal{I}$. Then, for every $t \in \mathcal{I}$ there exist matrix-valued functions $\tilde{Q}(t)$ and $\tilde{\Lambda}(t)$ analytic over $\mathcal{I}$ such that:
\begin{equation}\label{eq:eigedeco}
\tilde{A}(t) = \tilde{Q}(t) \tilde{\Lambda}(t) \tilde{Q}(t)^H, \text{ with } \tilde{\Lambda}(t) = \text{diag}(\tilde{\lambda}_1(t), \dots, \tilde{\lambda}_n(t)), \; \tilde{Q}(t)^H \tilde{Q}(t) = I,    
\end{equation}
for every $t \in \mathcal{I}$; see \cite[Chapter II, Section 6]{kato} (refer to \cite{dieci99} for extensions to the non-analytic case). The elements of the diagonal matrix $\tilde{\Lambda}(t)$ are analytic functions and, for every $t\in \mathcal{I}$, the $\tilde{\lambda}_j(t)$ are the eigenvalues (eigencurves) of $\tilde{A}(t)$.  The columns of $\tilde{Q}(t)$, denoted $\tilde{q}_1(t), \dots, \tilde{q}_N(t)$, are the corresponding eigenvectors (analytic over $\mathcal{I}$).

Given $A(t,s) \in \mathcal{A}_\Theta^{N \times N}(\mathcal{I})$, the $\star$-eigenproblem is to find the $\star$-eigenvalues $\lambda(t,s)\in \mathcal{A}_\Theta(\mathcal{I})$ and the $\star$-eigenvector $q(t,s) \in \mathcal{A}^{N \times 1}(\mathcal{I})$ such that 
\begin{equation}\label{eq:stareig}
    A(t,s) \star q(t,s) = \lambda(t,s) \star q(t,s).
\end{equation}
If $\lambda(t,s)$ and $q(t,s)$ exist, then $q(t,s) \star a(t,s)$ is also a $\star$-eigenvector, for every $a(t,s)\not\equiv 0$ from $\mathcal{A}(\mathcal{I})$. 
For the specific case of interest, where $A(t,s)=\tilde{A}(t)\Theta(t-s)$, the solution to the $\star$-eigenproblem is in the following theorem.
\begin{theorem}\label{thm:stareig}
    Let $A(t,s) = \tilde{A}(t)\Theta(t-s)$ be in $\mathcal{A}_\Theta(\mathcal{I})$, and let $\tilde{\lambda}_i(t)$ and $\tilde{q}_i(t)$, be the (analytic) eigencurves and the corresponding eigenvectors as defined in \eqref{eq:eigedeco} for $i=1,\dots, N$. Then, the solution to the $\star$-eigenvalue problem \eqref{eq:stareig} is given by
    \begin{equation*}
        \lambda_i(t,s) = \tilde{\lambda}_i(t)\Theta(t-s), \quad q_i(t,s) = \tilde{q}_i'(t)\Theta(t-s) + \tilde{q}_i(t) \delta(t-s), \quad i=1,\dots, N.
    \end{equation*}
    where $\tilde{q}_i'(t)$ is the derivative of $\tilde{q}_i(t)$.
\end{theorem}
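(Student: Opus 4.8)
The plan is to verify directly that the proposed pair $(\lambda_i,q_i)$ satisfies the $\star$-eigenrelation \eqref{eq:stareig}, by evaluating both sides of $A\star q_i=\lambda_i\star q_i$ from the integral definition of the $\star$-product and reducing everything to the pointwise eigendecomposition \eqref{eq:eigedeco}.

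First I would expand the left-hand side $A\star q_i=\tilde{A}(t)\Theta(t-s)\star\big(\tilde{q}_i'(t)\Theta(t-s)+\tilde{q}_i(t)\delta(t-s)\big)$ by linearity into a $\Theta\star\Theta$ piece and a $\Theta\star\delta$ piece. For the first, $\int_\mathcal{I}\tilde{A}(t)\Theta(t-\tau)\tilde{q}_i'(\tau)\Theta(\tau-s)\,\mathrm{d}\tau=\tilde{A}(t)\big(\tilde{q}_i(t)-\tilde{q}_i(s)\big)\Theta(t-s)$ by the fundamental theorem of calculus (equivalently, by the ``Integration in $t$'' rule of Table~\ref{tab:starprop}, the matrix factor $\tilde{A}(t)$ pulling out of the $\tau$-integral). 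For the second, sifting the Dirac delta at $\tau=s$ gives $\tilde{A}(t)\tilde{q}_i(s)\Theta(t-s)$. Adding the two, the $\tilde{q}_i(s)$ contributions cancel and one obtains $A\star q_i=\tilde{A}(t)\tilde{q}_i(t)\Theta(t-s)$; invoking $\tilde{A}(t)\tilde{q}_i(t)=\tilde{\lambda}_i(t)\tilde{q}_i(t)$ from \eqref{eq:eigedeco} rewrites this as $\tilde{\lambda}_i(t)\tilde{q}_i(t)\Theta(t-s)$.

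Next I would carry out the identical computation for the right-hand side $\lambda_i\star q_i=\tilde{\lambda}_i(t)\Theta(t-s)\star\big(\tilde{q}_i'(t)\Theta(t-s)+\tilde{q}_i(t)\delta(t-s)\big)$. Since $\tilde{\lambda}_i$ is scalar-valued and depends only on the leading variable $t$, the same two convolutions yield $\lambda_i\star q_i=\tilde{\lambda}_i(t)\tilde{q}_i(t)\Theta(t-s)$. Comparing the two expressions gives $A\star q_i=\lambda_i\star q_i$, and since the $\delta$-part of $q_i$ equals $\tilde{q}_i(s)\delta(t-s)$ with $\tilde{q}_i$ a unit eigenvector, $q_i\not\equiv 0$, so it is a genuine $\star$-eigenvector; as $i$ was arbitrary, this proves the claim for all $i=1,\dots,N$.

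No real obstacle is expected: the argument is pure bookkeeping with the two elementary convolutions $\Theta\star(\tilde{g}\Theta)$ and $\Theta\star(\tilde{g}\delta)$. The only points needing care are the handling of the distributional term $\tilde{q}_i(t)\delta(t-s)$ inside the integral — using that $\delta(t-s)$ is supported on $t=s$, so $\tilde{q}_i(\tau)\delta(\tau-s)$ contributes $\tilde{q}_i(s)$ — and keeping the non-commuting matrix factor $\tilde{A}(t)$ on the correct side throughout. Note in particular that the commutativity hypothesis of Theorem~\ref{thm:main} is not needed for this verification; it enters only later, when these $\star$-eigenvectors are used to diagonalize $A$.
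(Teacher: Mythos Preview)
Your verification is correct and complete: both $\star$-products are evaluated properly, the cancellation of the $\tilde q_i(s)$ terms is the crux, and the appeal to the pointwise eigenrelation $\tilde A(t)\tilde q_i(t)=\tilde\lambda_i(t)\tilde q_i(t)$ is exactly what is needed.

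The route, however, is genuinely different from the paper's. You take the explicit expression for $q_i$ as given and check $A\star q_i=\lambda_i\star q_i$ by computing the two elementary convolutions directly from the integral definition. The paper instead argues algebraically: it starts from the trivial identity $\tilde A(t)\delta\star\tilde q_i\Theta=\tilde\lambda_i(t)\delta\star\tilde q_i\Theta$, then inserts $\delta=\Theta\star\delta'$ on both sides and regroups, \emph{defining} $q_i:=\delta'\star\tilde q_i\Theta$ in the process; only at the end is this expanded via the differentiation rule of Table~\ref{tab:starprop} into $\tilde q_i'\Theta+\tilde q_i\delta$. Your approach is more elementary and self-contained (no use of $\delta'$ or of Table~\ref{tab:starprop}), while the paper's derivation has the advantage of producing the compact representation $q_i=\delta'\star\tilde q_i\Theta$, which is precisely the form reused in the proofs of Corollary~\ref{cor:stareigdec} and Lemma~\ref{lemma:unitary}.
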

\begin{proof}
First, note that 
\begin{align*}
    \tilde{\lambda}_i(t) \delta(t-s) \star \tilde{q}_i(t) \Theta(t-s) &= \tilde{\lambda}_i(t) \int_\mathcal{I} \delta(t-\tau) \tilde{q}_i(\tau) \Theta(\tau-s) \, \text{d}\tau \\
        &= \tilde{\lambda}_i(t) \tilde{q}_i(t) \Theta(t-s) = \tilde{A}(t) \tilde{q}_i(t) \Theta(t-s) \\
        &= \tilde{A}(t) \delta(t-s) \star \tilde{q}_i(t) \Theta(t-s).
\end{align*}
Using the fact that $\tilde{\lambda}_i(t) \delta(t-s) \star \Theta(t-s) = \tilde{\lambda}_i(t) \Theta(t-s)$, and that $\delta'(t-s) \star \Theta(t-s) = \Theta(t-s) \star \delta'(t-s) = \delta(t-s)$, see Table~\ref{tab:stardef}, we obtain (we omit the variables for the sake of readability)
\begin{align*}
    \tilde{\lambda}_i \delta \star \tilde{q}_i \Theta &=  \tilde{\lambda}_i \delta \star \Theta \star \delta' \star \tilde{q}_i \Theta = \tilde{\lambda}_i \Theta \star \delta' \star \tilde{q}_i \Theta = \tilde{\lambda}_i \Theta \star q_i,
\end{align*}
where $q_i(t,s):= \delta'(t-s) \star \tilde{q}_i(t) \Theta(t-s)$. Similarly, $\tilde{A} \delta \star \tilde{q}_i \Theta = \tilde{A}\Theta \star q_i$. Combining these results, we get
\begin{align*}
    \tilde{\lambda}_i \Theta \star q_i = \tilde{\lambda}_i \delta \star \tilde{q}_i \Theta = \tilde{\lambda}_i \tilde{q}_i \Theta = \tilde{A} \tilde{q}_i \Theta = \tilde{A} \delta \star \tilde{q}_i \Theta = \tilde{A}\Theta \star q_i. 
\end{align*}
Finally, we obtain the following expression for the $\star$-eigenvectors:
\begin{align*}
    q_i(t,s) &= \delta'(t-s) \star \tilde{q}_i(t)\Theta(t-s) \\
    &= \tilde{q}_i'(t)\Theta(t-s) + \tilde{q}_i(t) \delta(t-s);
\end{align*}
see Table~\ref{tab:starprop}.
As a final remark, note that all the $\star$-products are well-defined thanks to the fact that the $\tilde{\lambda}_i(t)$ and $\tilde{q}_i(t)$ are analytic functions.
\end{proof}

Consider the matrix
\begin{equation}\label{eq:Adef}
    A(t,s) = \tilde{A}_{-1}(t,s)\Theta(t-s) + \sum_{j=0}^k \tilde{A}_j(t)\delta^{(j)}(t-s) \in \mathcal{A}^{N \times M}(\mathcal{I}),
\end{equation}
we define the Hermitian transpose of $A$ as
\begin{equation}\label{eq:AH}
    A^H(t,s) = \tilde{A}_{-1}^H(t,s)\Theta(t-s) + \sum_{j=0}^k \tilde{A}^H_j(t)\delta^{(j)}(t-s) \in \mathcal{A}^{M \times N}(\mathcal{I}),
\end{equation}
with $\tilde{A}_j^H$ the usual Hermitian transpose of a matrix.
As an immediate consequence of Theorem~\ref{thm:stareig}, we have the following $\star$-factorization of $A(t,s)$.
\begin{corollary}\label{cor:stareigdec}
    Under the same assumption of Theorem~\ref{thm:main}, we have
    \begin{equation*}
    A(t,s) = Q(t,s) \star \Lambda(t,s) \star Q(t,s)^H,
\end{equation*}
with $\Lambda(t,s) = \tilde{\Lambda}(t)\Theta(t-s)$ and $Q(t,s)= [q_1(t,s), \dots, q_N(t,s)]$. Moreover, it holds
\begin{equation*}
    Q(t,s) \star Q(t,s)^H = Q(t,s)^H \star Q(t,s) = I_\star,
\end{equation*}
that is, $Q(t,s)^H$ is the matrix $\star$-inverse of $Q(t,s)$.
\end{corollary}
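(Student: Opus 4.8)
The plan is to obtain the factorization from Theorem~\ref{thm:stareig} combined, through associativity of the $\star$-product, with a $\star$-orthogonality relation for $Q$. Concretely, two facts suffice: first, $Q \star Q^H = Q^H \star Q = I_\star$; second, $A \star Q = Q \star \Lambda$. Granting these, one concludes immediately by
\begin{equation*}
A = A \star I_\star = A \star (Q \star Q^H) = (A \star Q) \star Q^H = (Q \star \Lambda) \star Q^H = Q \star \Lambda \star Q^H,
\end{equation*}
and the relation $Q^H \star Q = I_\star$ is exactly the statement that $Q^H$ is the matrix $\star$-inverse of $Q$.

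For the orthogonality relation I would write $Q(t,s) = \tilde{Q}'(t)\Theta(t-s) + \tilde{Q}(t)\delta(t-s)$ and, by \eqref{eq:AH}, $Q^H(t,s) = (\tilde{Q}^H)'(t)\Theta(t-s) + \tilde{Q}^H(t)\delta(t-s)$, and expand $Q^H \star Q$ into its four terms using the elementary rules (all consequences of Tables~\ref{tab:stardef}--\ref{tab:starprop})
\begin{equation*}
\tilde{f}(t)\Theta \star \tilde{g}(t)\Theta = \tilde{f}(t)\,\Theta(t-s)\!\int_s^t\! \tilde{g}(\tau)\,\textrm{d}\tau,\quad \tilde{f}(t)\Theta \star \tilde{g}(t)\delta = \tilde{f}(t)\tilde{g}(s)\Theta(t-s),
\end{equation*}
together with $\tilde{f}(t)\delta \star \tilde{g}(t)\Theta = \tilde{f}(t)\tilde{g}(t)\Theta(t-s)$ and $\tilde{f}(t)\delta \star \tilde{g}(t)\delta = \tilde{f}(t)\tilde{g}(t)\delta(t-s)$. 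The three $\Theta$-supported contributions telescope into $(\tilde{Q}^H\tilde{Q})'(t)\,\Theta(t-s)$, which vanishes since $\tilde{Q}^H\tilde{Q} \equiv I$, while the $\delta$-supported term is $\tilde{Q}^H(t)\tilde{Q}(t)\delta(t-s) = I_\star$. The identity $Q \star Q^H = I_\star$ is obtained in the same way, using that $\tilde{Q}(t)$ is square and unitary, hence $\tilde{Q}\tilde{Q}^H \equiv I$ as well.

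For $A \star Q = Q \star \Lambda$, Theorem~\ref{thm:stareig} gives $A \star q_i = \lambda_i \star q_i$ for $i = 1,\dots,N$, and the hypothesis inherited from Theorem~\ref{thm:main} is precisely $q_i \star \lambda_i = \lambda_i \star q_i$, so $A \star q_i = q_i \star \lambda_i$. Since $\Lambda = \tilde{\Lambda}(t)\Theta(t-s)$ is diagonal, the $i$-th column of $Q \star \Lambda$ is $q_i \star \lambda_i$ and the $i$-th column of $A \star Q$ is $A \star q_i$; reading the equality column by column yields $A \star Q = Q \star \Lambda$, which closes the argument sketched in the first paragraph.

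I expect the only delicate point to be the bookkeeping in the orthogonality step: carefully separating the $\Theta$-supported from the $\delta$-supported contributions and verifying that the $\Theta$-parts collapse to an exact derivative of $\tilde{Q}^H\tilde{Q}$ (resp.\ $\tilde{Q}\tilde{Q}^H$); everything else is short formal manipulation. It is worth flagging that the commutativity hypothesis of Theorem~\ref{thm:main} is genuinely used: without it one has $A \star Q \neq Q \star \Lambda$ in general, and a spurious $\Theta$-supported term survives in $Q \star \Lambda \star Q^H$.
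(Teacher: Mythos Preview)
Your argument is correct and follows the same overall structure as the paper's proof: establish $Q^H \star Q = Q \star Q^H = I_\star$, obtain $A \star Q = Q \star \Lambda$, and right-$\star$-multiply by $Q^H$. The only technical difference is in the orthogonality step. The paper exploits the factored form $q_k = \delta' \star \tilde{q}_k\Theta$ and the identities $\tilde{q}_i^H\Theta \star \delta' = \tilde{q}_i^H\delta$ (since $\tilde{q}_i^H(t)$ is independent of $s$) and $\delta' \star \Theta = \delta$ to collapse $q_i^H \star q_j$ directly to $\delta' \star \tilde{q}_i^H\tilde{q}_j\Theta = \delta_{ij}\delta$, never writing down an explicit derivative; your four-term expansion of $(\tilde{Q}^H)'\Theta + \tilde{Q}^H\delta$ against $\tilde{Q}'\Theta + \tilde{Q}\delta$ is more hands-on but equally valid, and the telescoping into $(\tilde{Q}^H\tilde{Q})'\Theta = 0$ is exactly right. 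One point worth noting: you make explicit where the commutativity hypothesis $q_i \star \lambda_i = \lambda_i \star q_i$ of Theorem~\ref{thm:main} is invoked (to pass from the eigenvalue relation $A \star q_i = \lambda_i \star q_i$ of Theorem~\ref{thm:stareig} to the matrix identity $A \star Q = Q \star \Lambda$), whereas the paper simply asserts the latter follows from Theorem~\ref{thm:stareig} and leaves that step implicit.
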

\begin{proof}
    We first show that for every $i,j=1,\dots,N$ we have
    \begin{equation*}
        q_i(t,s)^H \star q_j(t,s) = \delta_{ij} \delta(t-s),
    \end{equation*}
    with $\delta_{ij}$ the Kronecker delta. Since $q_k(t,s) = \delta'(t-s) \star \tilde{q}_k \Theta(t-s)$, for $k=1,\dots,N$, then
    \begin{align*}
        q_i(t,s)^H \star q_j(t,s) &= \big(\delta'(t-s) \star \tilde{q}_i^H(t) \Theta(t-s)\big) \star \big(\delta'(t-s) \star \tilde{q}_j(t) \Theta(t-s)\big) \\
            &= \delta'(t-s) \star \big( \tilde{q}_i^H(t) \Theta(t-s) \star \delta'(t-s) \big) \star \tilde{q}_j(t) \Theta(t-s) \\
            &= \delta'(t-s) \star \big(\tilde{q}_i^H(t)\delta(t-s) \star \tilde{q}_j(t) \Theta(t-s) \big) \\
            &= \delta'(t-s) \star \tilde{q}_i^H(t)\tilde{q}_j(t)\Theta(t-s) = \delta'(t-s) \star \delta_{ij}\Theta(t-s) \\
            &= \delta_{ij}\delta(t-s).
    \end{align*}
   From Theorem~\ref{thm:stareig} we get the equality
\begin{equation*}
    A(t,s) \star Q(t,s) = Q(t,s) \star \Lambda(t,s).
\end{equation*}
The conclusion follows from $\star$-multiplying from the right by $Q(t,s)^H$. 
\end{proof}

\bigskip

Since our final goal is to measure an error, we need to introduce a $\star$-inner product and the relative $\star$-norm. To this aim, we take inspiration from the results in \cite{Ry23}, but we develop them in a different direction. 
Following \cite{Ry23}, we define the \emph{$\star$-Hermitian-transpose} of $A(t,s)$ in \eqref{eq:Adef} as
\begin{equation*}
    A^{\star H}(t,s) := A^H(s,t) = \tilde{A}^H_{-1}(s,t)\Theta(s-t) + \sum_{j=0}^k\tilde{A}_{j}^H(s)\delta^{(j)}(s-t) \in \mathcal{A}^{M \times N}(\mathcal{I}).
\end{equation*}
Roughly speaking, one has to take the usual Hermitian transpose and then swap the variable $t,s$. Note the difference with the Hermitian transpose \eqref{eq:AH}.
Now, setting $\mathcal{I} = [a,b]$, 
and given $v,w$ such that  $\Theta \star v, \Theta \star w \in \mathcal{A}_\Theta^{N\times 1}(\mathcal{I})$, for any fixed $s \in [a,b)$ we can define the inner product:
\begin{equation*}
    \langle v, w \rangle_\star(s) := \left( (\Theta \star v)^{\star H} \star \Theta \star w \right)(t,s) \Big|_{t=s} = \int_\mathcal{I} v^H(\tau,s) w(\tau,s) \, \text{d}\tau.
\end{equation*}
Note that, denoting $\Theta(t-s) \star v(t,s) = \tilde{V}(t,s)\Theta(t-s)$ and $\Theta(t-s) \star w(t,s) = \tilde{W}(t,s)\Theta(t-s)$, then
\begin{equation*}
    \langle v, w \rangle_\star(s) =  \int_s^b \tilde{V}^H(\tau,s) \tilde{W}(\tau,s) \, \text{d}\tau,
\end{equation*}
which, for the fixed $s$, is the classical inner product of the functions $\tilde{V}(\cdot,s)$ and $\tilde{W}(\cdot,s)$ on the interval $[s,b]$ (note that $v,w \equiv 0$ if and only if  $\tilde{V}, \tilde{W} \equiv 0$).
The inner product $\langle v, w \rangle_\star(s)$ is, in fact, a family of inner products depending on the parameter $s \in [a,b)$. With an abuse of notation, we refer to the function $\langle v, w \rangle_\star: [a,b) \rightarrow \mathbb{C}$ as the \emph{$\star$-inner product} of $v$ and $w$. 
Thus, again with a notation abuse, we define the \emph{$\star$-norm} as
\begin{equation*}
    \| v \|_\star(s) := \sqrt{ \langle v, v \rangle_\star }(s), \quad s \in [a,b).
\end{equation*}
The definition is justified by the following theorem.
\begin{theorem}[Properties of the $\star$-norm]\label{thm:starvecnorm}
  Given $v, w$ such that $\Theta \star v, \Theta \star w \in \mathcal{A}_\Theta^{N \times 1}(\mathcal{I})$, with $\mathcal{I}=[a,b]$, then the following properties hold for every $s \in [a,b)$:
  \begin{enumerate}
    \item $\| v \|_\star(s) \geq 0$;
    \item $\| v \|_\star(s) \equiv 0$ if and only if $v(\cdot, s) \equiv 0$;
    \item $\| \alpha v \|_\star(s) = \| \alpha \delta \star v \|_\star(s) = |\alpha| \| v \|_\star(s)$ for any scalar $\alpha \in \mathbb{C}$;
    \item $\| v + w \|_\star(s) \leq \| v \|_\star(s) + \| w \|_\star(s)$. 
\end{enumerate}
Hence $\| v \|_\star(s)$ is a norm for every fixed $s \in [a,b)$.
\end{theorem}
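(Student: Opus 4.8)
The plan is to reduce each of the four assertions to an entirely classical fact about the $L_2$ inner product on a compact interval, exploiting the identity established just before the statement. Fix $s \in [a,b)$ and write $\Theta \star v = \tilde V \Theta$ and $\Theta \star w = \tilde W \Theta$ with $\tilde V, \tilde W$ (vector-valued) analytic on $\mathcal I^2$, which is legitimate by the hypothesis $\Theta \star v, \Theta \star w \in \mathcal A_\Theta^{N\times 1}(\mathcal I)$. Then, as already observed,
\[
\langle v, w\rangle_\star(s) = \int_s^b \tilde V^H(\tau,s)\, \tilde W(\tau,s)\, \text{d}\tau ,
\]
which, for this fixed $s$, is precisely the standard $L_2([s,b])$ inner product of $\tilde V(\cdot,s)$ and $\tilde W(\cdot,s)$; note that $[s,b]$ is a nondegenerate interval exactly because $s < b$, which is why $s$ is restricted to $[a,b)$.

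With this dictionary, properties 1, 3, and 4 are immediate. Property 1 holds since $\langle v, v\rangle_\star(s) = \int_s^b \|\tilde V(\tau,s)\|_2^2\, \text{d}\tau \geq 0$, so the square root is well defined and nonnegative. For property 3, since $\alpha v = \alpha\delta \star v$ and $\Theta \star (\alpha\delta \star v) = \alpha\, \tilde V \Theta$, one gets $\langle \alpha v, \alpha v\rangle_\star(s) = |\alpha|^2 \langle v, v\rangle_\star(s)$, and taking square roots gives the claim. For property 4, $\Theta \star (v + w) = (\tilde V + \tilde W)\Theta$, so $\|v + w\|_\star(s)$ is the $L_2([s,b])$ norm of $\tilde V(\cdot,s) + \tilde W(\cdot,s)$, and the triangle inequality follows from Minkowski's inequality applied to those two functions.

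The one point that needs genuine care is property 2, because vanishing of $\int_s^b \|\tilde V(\tau,s)\|_2^2\,\text{d}\tau$ only forces $\tilde V(\cdot,s)$ to vanish almost everywhere on $[s,b]$, whereas we must conclude that the \emph{distribution} $v(\cdot,s)$ — which may carry a term supported at $t = s$ — is identically zero. I would resolve this by analyticity: $\|v\|_\star(s) = 0$ forces $\tilde V(\tau,s) = 0$ for all $\tau \in [s,b]$ by continuity, hence $\tilde V(\cdot,s) \equiv 0$ on all of $\mathcal I$ since $\tilde V$ is analytic in its first argument; then $\star$-multiplying $\Theta \star v = \tilde V \Theta$ on the left by $\delta'$ and invoking Table~\ref{tab:starprop} gives $v = \tilde V^{(1,0)}\Theta + \tilde V\delta$, so $\tilde V(\cdot,s) \equiv 0$ yields $v(\cdot,s) \equiv 0$; the converse is clear, since $v(\cdot,s) \equiv 0$ forces $(\Theta\star v)(\cdot,s) \equiv 0$ and hence $\tilde V(\cdot,s) \equiv 0$. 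Assembling the four properties shows $\|\cdot\|_\star(s)$ obeys the norm axioms for every fixed $s \in [a,b)$, which is the assertion; the main obstacle, as noted, is this distributional subtlety in property 2 rather than any computation.
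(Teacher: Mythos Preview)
Your proof is correct and follows essentially the same route as the paper: write $\Theta\star v=\tilde V\Theta$, identify $\|v\|_\star(s)$ with the classical $L_2([s,b])$ norm of $\tilde V(\cdot,s)$, and read off the norm axioms. Your treatment of item~2 is in fact more careful than the paper's, which simply invokes the earlier parenthetical remark that $v\equiv 0$ iff $\tilde V\equiv 0$; your explicit use of analyticity (to pass from vanishing on $[s,b]$ to vanishing on all of $\mathcal I$) together with the identity $v=\delta'\star(\tilde V\Theta)=\tilde V^{(1,0)}\Theta+\tilde V\delta$ makes this step fully transparent.
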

\begin{proof}
Since $\Theta \star v = \tilde{V}\Theta$, with $\tilde{V}$ an analytic function over $\mathcal{I}^2$, then
\begin{equation*}
    \| v \|_\star^2(s) = \int_s^b \tilde{V}^H(\tau,s) \tilde{V}(\tau,s) \, \text{d}\tau,
\end{equation*}
is the classical norm of the function $\tilde{V}(\cdot,s)$ over the interval $[s,b]$.
Thus Item~1. is trivial. Item~2. is true since $\tilde{V}(\cdot,s)\equiv 0$ over $\mathcal{I}$ if and only if $v(\cdot, s) \equiv 0$ over $\mathcal{I}$. Item~3. and 4. hold since $\Theta \star \alpha v = \alpha(\Theta \star v)$ and $\Theta \star (v + w) = \Theta \star v + \Theta \star w $.
\end{proof}

\bigskip

\begin{lemma}\label{lemma:unitary}
    Let $Q(t,s)$ as in Corollary~\ref{cor:stareigdec}, then for every $v,w \in \mathcal{A}_\Theta^{N \times 1}(\mathcal{I})$, it holds
    \begin{equation*}
        \langle Q \star v, Q \star w \rangle_\star(s) = \langle Q^H \star v, Q^H \star w \rangle_\star(s) = \langle v, w \rangle_\star(s), \quad s \in [a,b),
    \end{equation*}
    that is $Q$ and $Q^H$ are unitary with respect to $\langle \cdot, \cdot \rangle_\star$ (note that $Q^H \neq Q^{\star H}$).
\end{lemma}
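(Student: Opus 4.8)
The plan is to reduce both $\star$-inner products in the statement to ordinary $L_2$ inner products on $[s,b]$ and then invoke the pointwise unitarity of the eigenvector matrix, $\tilde Q(\tau)^H\tilde Q(\tau)=\tilde Q(\tau)\tilde Q(\tau)^H=I$, which holds for every $\tau$ since $\tilde Q(\tau)$ is a square matrix with orthonormal columns, see \eqref{eq:eigedeco}. The starting point is two representations read off from Theorem~\ref{thm:stareig}: writing $Q$ column-wise, $Q(t,s)=\delta'(t-s)\star\tilde Q(t)\Theta(t-s)$, and, taking the ordinary Hermitian transpose and using the ``differentiation in $t$'' rule of Table~\ref{tab:starprop}, also $Q^H(t,s)=\delta'(t-s)\star\tilde Q(t)^H\Theta(t-s)$. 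Combined with $\Theta\star\delta'=\delta$ from Table~\ref{tab:stardef} this gives $\Theta\star Q=\tilde Q\Theta$ and $\Theta\star Q^H=\tilde Q^H\Theta$.

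Next, let $v$ satisfy $\Theta\star v=\tilde V\Theta$ with $\tilde V$ analytic (such a $\tilde V$ exists by hypothesis, since $\Theta\star v\in\mathcal{A}_\Theta^{N\times1}$). Using associativity together with the identity $\tilde Q\Theta=\tilde Q\delta\star\Theta$, I would compute
\[
\Theta\star(Q\star v)=(\Theta\star\delta')\star(\tilde Q\Theta\star v)=\tilde Q\Theta\star v=\tilde Q\delta\star(\Theta\star v)=\tilde Q\delta\star\tilde V\Theta=(\tilde Q\tilde V)\Theta,
\]
and, in exactly the same way, $\Theta\star(Q^H\star v)=(\tilde Q^H\tilde V)\Theta$. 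In words, applying $Q$ (resp. $Q^H$) to $v$ and then $\Theta\star$ amounts to pointwise left-multiplication of $\tilde V$ by the unitary matrix $\tilde Q(t)$ (resp. $\tilde Q(t)^H$); since $\tilde Q\tilde V$ and $\tilde Q^H\tilde V$ are again analytic, $Q\star v$, $Q\star w$, $Q^H\star v$, $Q^H\star w$ all meet the hypothesis of the $\star$-inner product, so every term in the statement is well defined.

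Finally, writing $\Theta\star w=\tilde W\Theta$ and substituting into the integral form of $\langle\cdot,\cdot\rangle_\star$ recalled before Theorem~\ref{thm:starvecnorm},
\[
\langle Q\star v,\,Q\star w\rangle_\star(s)=\int_s^b\bigl(\tilde Q(\tau)\tilde V(\tau,s)\bigr)^H\tilde Q(\tau)\tilde W(\tau,s)\,\textrm{d}\tau=\int_s^b\tilde V(\tau,s)^H\tilde W(\tau,s)\,\textrm{d}\tau=\langle v,w\rangle_\star(s),
\]
where the middle equality uses $\tilde Q(\tau)^H\tilde Q(\tau)=I$; the computation for $Q^H$ is identical with $\tilde Q(\tau)\tilde Q(\tau)^H=I$ in place of it.

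The only genuinely delicate point is the distributional bookkeeping in the second step — in particular justifying $Q^H=\delta'\star(\tilde Q^H\Theta)$ and the identity $\tilde Q\Theta\star v=\tilde Q\delta\star(\Theta\star v)$ — since $v$ and $Q^H$ carry Dirac components and do not themselves lie in $\mathcal{A}_\Theta$, so these manipulations must be justified directly from Tables~\ref{tab:stardef}--\ref{tab:starprop}. Once $\Theta\star$ has been applied everything collapses to honest analytic functions and the remainder is routine.
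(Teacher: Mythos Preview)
Your proposal is correct and follows essentially the same route as the paper: both arguments use $\Theta\star Q=\tilde Q\Theta$ (from $Q=\delta'\star\tilde Q\Theta$ and $\Theta\star\delta'=\delta$), then reduce $\Theta\star(Q\star v)$ to $(\tilde Q\tilde V)\Theta$ via the identity $\tilde Q\Theta\star v=\tilde Q(t)\,(\Theta\star v)$, and finish with pointwise unitarity $\tilde Q(\tau)^H\tilde Q(\tau)=I$ inside the $L_2$ integral. One small slip: by hypothesis $v,w\in\mathcal{A}_\Theta^{N\times1}(\mathcal{I})$, so $v$ does \emph{not} carry Dirac components---only $Q$ and $Q^H$ do---which actually makes the ``delicate'' bookkeeping you flag easier than you suggest.
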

\begin{proof}
Recalling that $Q(t,s) = \delta'(t-s) \star \tilde{Q}(t)\Theta(t-s)$, we get $\Theta(t-s) \star Q(t,s) = \tilde{Q}(t)\Theta(t-s)$. Therefore, 
\begin{align*}
    \langle Q \star v, Q \star w \rangle_\star(s) &= \left[(\tilde{Q}\Theta \star v)^{\star H} \star \tilde{Q}\Theta \star w\right](s,s).
\end{align*}
Now, denoting $\tilde{V}(t,s)\Theta(t-s): = \Theta(t-s) \star v(t,s)$ and $\tilde{W}(t,s)\Theta(t-s): = \Theta(t-s) \star v(t,s)$, we observe
\begin{align*}
    \tilde{Q}(t)\Theta(t-s) \star v(t,s) &= \tilde{Q}(t)\left(\Theta(t-s) \star v(t,s)\right) = \tilde{Q}(t)\tilde{V}(t,s)\Theta(t-s), \\
    \tilde{Q}(t)\Theta(t-s) \star w(t,s) &= \tilde{Q}(t)\left(\Theta(t-s) \star w(t,s)\right) = \tilde{Q}(t)\tilde{W}(t,s)\Theta(t-s).
\end{align*}
Therefore,
    \begin{align*}
    \left[(\tilde{Q}\Theta \star v)^{\star H} \star \tilde{Q}\Theta \star w\right](s,s) &= \int_s^b \tilde{V}^H(\tau,s) \tilde{Q}^H(\tau) \tilde{Q}(\tau)\tilde{W}(\tau,s) \, \text{d}\tau \\
        &= \int_s^b \tilde{V}^H(\tau,s) \tilde{W}(\tau,s) \, \text{d}\tau \\
        &= \left[(\Theta \star v)^{\star H} \star \Theta \star w \right](s,s) = \langle v, w \rangle_\star(s).
\end{align*}
Analogous arguments show that $\langle Q^H \star v, Q^H \star w \rangle_\star = \langle v, w \rangle_\star$.
\end{proof}

Finally, given a matrix $A$ so that $\Theta \star A \in \mathcal{A}_\Theta^{N \times N}(\mathcal{I})$, we define the \emph{induced matrix $\star$-norm} of $A$ as
\begin{equation}\label{eq:defmtxnorm}
    \| A \|_\star(s) := \sup_{v\not \equiv 0 \, : \, \Theta \star v \in \mathcal{A}_\Theta^{N \times 1}(\mathcal{I})} \frac{\| A \star v \|_\star(s)}{\| v \|_\star(s)}, \quad s \in [a,b).
\end{equation}
The term norm is again an abuse of notation. The following theorem explains in which sense $\| A \|_\star$ is a norm and provides several useful properties.
\begin{theorem}[Properties of the induced matrix $\star$-norm]\label{thm:starmtxnorm}
Let $A, B$ such that $\Theta \star A, \Theta \star B \in \mathcal{A}_\Theta^{N \times N}(\mathcal{I})$ with $\mathcal{I} = [a,b]$. Then, $\| A \|_\star$ satisfies the following properties for every $s \in [a,b)$:
\begin{enumerate}
    \item $\| A \|_\star(s) \geq 0$;
    \item $\| A \|_\star(s) \equiv 0$ if and only if $A(\cdot, s)\equiv 0$;
    \item $\| \alpha A \|_\star(s) = \| \alpha \delta \star A \|_\star(s) = |\alpha| \| A \|_\star(s)$ for any scalar $\alpha \in \mathbb{C}$;
    \item $\| A + B \|_\star(s) \leq \| A \|_\star(s) + \| B \|_\star(s)$;
    \item $\| A \star B \|_\star(s) \leq \| A \|_\star(s) \star \| B \|_\star(s)$.
\end{enumerate}
Therefore, $\| \cdot \|_\star(s)$ is a  sub-multiplicative matrix norm for every fixed $s \in [a,b)$.
\end{theorem}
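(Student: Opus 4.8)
The plan is to reproduce the classical argument that the operator norm induced by a vector norm is a submultiplicative matrix norm, transplanting each step to the $\star$-setting by means of Theorem~\ref{thm:starvecnorm} and the bilinearity and associativity of the $\star$-product. Fix $s\in[a,b)$ throughout. Item~1 is immediate, since $\|A\|_\star(s)$ is a supremum of the nonnegative ratios $\|A\star v\|_\star(s)/\|v\|_\star(s)$ (the numerator is nonnegative by Item~1 of Theorem~\ref{thm:starvecnorm}). Item~3 follows from $\alpha A=\alpha\delta\star A$ and $(\alpha\delta\star A)\star v=\alpha(A\star v)$, so that $\|(\alpha A)\star v\|_\star(s)=|\alpha|\,\|A\star v\|_\star(s)$ by Item~3 of Theorem~\ref{thm:starvecnorm} and $|\alpha|$ pulls out of the supremum. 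Item~4 follows from $(A+B)\star v=A\star v+B\star v$, the vector triangle inequality (Item~4 of Theorem~\ref{thm:starvecnorm}), and $\sup_v\big(f(v)+g(v)\big)\le\sup_v f(v)+\sup_v g(v)$. For Item~2, if $\|A\|_\star(s)=0$ then, testing against $v=e_k\,\delta$ for $k=1,\dots,N$ (for which $\Theta\star v=e_k\Theta\in\mathcal{A}_\Theta^{N\times 1}$ and $A\star(e_k\delta)=A\,e_k$ is the $k$-th column of $A$), one gets $\|A\,e_k\|_\star(s)=0$, hence the $k$-th column of $A$ vanishes by Item~2 of Theorem~\ref{thm:starvecnorm}; letting $k$ run gives $A(\cdot,s)\equiv0$. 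The opposite implication is direct in the case relevant here, where $A(t,s)=\tilde A(t)\Theta(t-s)$ and $A(\cdot,s)\equiv0$ just means $\tilde A\equiv0$.

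The substantive point is the submultiplicativity, Item~5. One first checks that the products involved stay inside the class on which $\|\cdot\|_\star(s)$ is defined: for admissible $v$ (those with $\Theta\star v\in\mathcal{A}_\Theta^{N\times 1}$), associativity gives $\Theta\star(B\star v)=(\Theta\star B)\star v$, and writing $\Theta\star B=\tilde B\Theta$, $\Theta\star v=\tilde V\Theta$ and expanding $(\tilde B\Theta)\star\delta'\star(\tilde V\Theta)$ with the action rules of Table~\ref{tab:starprop} shows $(\Theta\star B)\star v\in\mathcal{A}_\Theta^{N\times 1}$, so that $B\star v$ is again admissible; the same computation gives $\Theta\star(A\star B)\in\mathcal{A}_\Theta^{N\times N}$. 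Then, straight from the definition of the induced norm, $\|A\star w\|_\star(s)\le\|A\|_\star(s)\,\|w\|_\star(s)$ for every admissible $w$; applying this with $w=B\star v$, bounding $\|B\star v\|_\star(s)\le\|B\|_\star(s)\,\|v\|_\star(s)$ in turn, and using $A\star B\star v=A\star(B\star v)$, we obtain
\[
\|(A\star B)\star v\|_\star(s)\le\|A\|_\star(s)\,\|B\|_\star(s)\,\|v\|_\star(s)
\]
for every admissible $v$. Dividing by $\|v\|_\star(s)$ and taking the supremum over $v$ yields Item~5. Collecting Items~1--5 then gives the statement.

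The main difficulty is bookkeeping rather than conceptual. One has to verify, at each occurrence, that the bivariate distribution at hand still satisfies $\Theta\star(\cdot)\in\mathcal{A}_\Theta$, so that $\|\cdot\|_\star(s)$ applies to it and the chain of inequalities above is legitimate; this relies on the $\star$-closedness of $\mathcal{A}_\Theta(\mathcal{I})$ and on the integration/differentiation identities of Table~\ref{tab:starprop}. One should also confirm that the supremum defining $\|A\|_\star(s)$ is finite, as required for it to be a norm: with $\Theta\star A=\tilde A\Theta$ and $\tilde A$ analytic, hence bounded on the compact $\mathcal{I}^2$, the map $v\mapsto A\star v$ acts on $\tilde V(\cdot,s)$ (where $\Theta\star v=\tilde V\Theta$) as multiplication by a bounded function plus a Volterra-type integral operator on $L^2([s,b])$, and is therefore bounded. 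Finally, a subtle point in Item~2 is that $\|\cdot\|_\star(s)$ only sees the $\star$-action of $A$, which couples the values of $A$ over the triangle $\{\,s\le\tau\le t\le b\,\}$ rather than the slice at $s$ alone; this is exactly why the definiteness is argued through the test vectors $e_k\delta$ above.
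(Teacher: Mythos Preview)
Your proof is correct and follows the same approach as the paper: Items~1--4 are deduced from the vector $\star$-norm properties of Theorem~\ref{thm:starvecnorm}, and Item~5 is obtained by the standard chain $\|A\star B\star v\|_\star(s)\le\|A\|_\star(s)\,\|B\star v\|_\star(s)\le\|A\|_\star(s)\,\|B\|_\star(s)\,\|v\|_\star(s)$. In fact you go further than the paper by verifying that $B\star v$ remains admissible, that the supremum defining $\|A\|_\star(s)$ is finite, and by spelling out the definiteness argument via the test vectors $e_k\delta$; the paper simply states that Items~1--4 are corollaries of Theorem~\ref{thm:starvecnorm} without these checks. The only soft spot is your converse in Item~2, where you restrict to $A(t,s)=\tilde A(t)\Theta(t-s)$; the theorem is stated for the broader class $\Theta\star A\in\mathcal{A}_\Theta^{N\times N}$, but the paper does not treat this case explicitly either, so this is not a gap relative to the original.
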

\begin{proof}
  Items~1--4 are corollaries of Theorem~\ref{thm:starvecnorm}.
  Note that 
  \begin{equation*}
        \| A \star v \|_\star(s) = \frac{\| A \star v \|_\star(s)}{\| v \|_\star(s)} \|v \|_\star(s) \leq \| A \|_\star(s) \, \| v \|_\star(s), \quad s \in [a,b).
  \end{equation*}
  Therefore, $\| A \star B \star v \|_\star(s) \leq \| A \|_\star(s) \| B \star v \|_\star(s) \leq \| A \|_\star(s)  \| B \|_\star(s)   \| v \|_\star(s)$, proving Item~5.
\end{proof}

\section{Approximation of the $\star$-resolvent}\label{sec:app} 
Let $\tilde{v} \in \mathbb{C}^N$ be a (constant) vector and the $A(t,s)=\tilde{A}(t)\Theta(t-s) \in \mathcal{A}_\Theta^{N \times N}(\mathcal{I})$ a matrix-valued function, with $\tilde{A}(t)$ Hermitian for $t\in \mathcal{I}$ and $\mathcal{I}=[a,b]$. 
We want to find the optimal $\star$-polynomials of degree at most $n$ approximating the $\star$-resolvent $R^\star(A)\tilde{v}$ in the sense of the $\star$-norm.
That is, finding $q^\star \in \mathcal{P}_n^\star$ so that
\begin{equation}\label{eq:minprob}
    \| R^\star(A)\tilde{v} - q^\star(A)\tilde{v}  \|_\star(s) = \min_{p^\star \in \mathcal{P}^\star_n} \| R^\star(A)\tilde{v} - p^\star(A)\tilde{v}  \|_\star(s), \quad s \in [a,b).
\end{equation}
Under the assumption of Corollary~\ref{cor:stareigdec}, $A(t,s) = Q(t,s) \star \Lambda(t,s) \star Q(t,s)^H$, where the diagonal elements of $\Lambda$ are the $\star$-eigenvalues $\lambda_j(t,s) = \tilde{\lambda}_j(t)\Theta(t-s)$, $j=1,\dots,n$.
Since $Q^H \star Q = I_\star$, then we have $A^{\star 2} = Q \star \Lambda \star Q^H \star Q \star \Lambda \star Q^H = Q \star \Lambda^{\star 2} \star Q^H$, and hence
\begin{equation}\label{eq:powerdec}
    A^{\star k} = Q \star \Lambda^{\star k} \star Q^H, \quad k=0,1,2, \dots \,.
\end{equation}
\begin{theorem}\label{thm:error}
    Let $w = Q^H \star \tilde{v}\delta = Q^H\tilde{v}$. Then, the problem \eqref{eq:minprob} is equivalent to finding the $n$-degree $\star$-polynomial $q^\star$ such that
    \begin{equation*}
            \| R^\star(A)\tilde{v} - q^\star(A)\tilde{v}  \|_\star(s) = \min_{p^\star \in \mathcal{P}^\star_n} \| R^\star(\Lambda) \star w - p^\star(\Lambda) \star w  \|_\star(s), \quad s \in [a,b).
    \end{equation*}
    Moreover, for a fixed $s \in [a,b)$, the error can be bounded by
    \begin{equation*}
            \frac{\| R^\star(A)\tilde{v} - q^\star(A)\tilde{v}  \|_\star(s)}{\| \tilde{v}\delta \|_\star(s)} \leq \min_{p^\star \in \mathcal{P}^\star_n} \, \max_{i=1,\dots,N} \| R^\star(\lambda_i) - p^\star(\lambda_i)  \|_\star(s), \quad s \in [a,b),
    \end{equation*}
    where $\lambda_1(t,s), \dots \lambda_N(t,s)$ are the $\star$-eigenvalues of $A(t,s)$.
\end{theorem}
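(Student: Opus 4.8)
The plan is to prove the three assertions in turn, exploiting the $\star$-eigendecomposition of Corollary~\ref{cor:stareigdec} together with the unitarity of $Q$ proved in Lemma~\ref{lemma:unitary}. First I would rewrite the approximant. Using \eqref{eq:powerdec}, any $\star$-polynomial $p^\star(A) = \sum_j \alpha_j A^{\star j}$ satisfies $p^\star(A) = Q \star p^\star(\Lambda) \star Q^H$, and the same identity extends to the $\star$-resolvent by summing the geometric-type series, so $R^\star(A) = Q \star R^\star(\Lambda) \star Q^H$ (convergence being guaranteed since $\Lambda \in \mathcal{A}_\Theta^{N\times N}(\mathcal{I})$). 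Hence $R^\star(A)\tilde v - p^\star(A)\tilde v = Q \star \big( R^\star(\Lambda) - p^\star(\Lambda) \big) \star Q^H \star \tilde v\delta = Q \star \big( R^\star(\Lambda)\star w - p^\star(\Lambda)\star w \big)$, where $w = Q^H \star \tilde v\delta$. Applying Lemma~\ref{lemma:unitary} (with the left factor $Q$ unitary for $\langle\cdot,\cdot\rangle_\star$) gives $\| R^\star(A)\tilde v - p^\star(A)\tilde v \|_\star(s) = \| R^\star(\Lambda)\star w - p^\star(\Lambda)\star w \|_\star(s)$ for every $s\in[a,b)$. Since this equality holds termwise in $\mathcal{P}_n^\star$, the two minimization problems have the same value and the same minimizer $q^\star$; this is the first claim.

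Next I would pass from the matrix $\Lambda$ to the scalar $\star$-eigenvalues. Because $\Lambda(t,s) = \tilde\Lambda(t)\Theta(t-s)$ is diagonal, $R^\star(\Lambda) - p^\star(\Lambda)$ is the diagonal matrix with entries $R^\star(\lambda_i) - p^\star(\lambda_i)$, and $\big(R^\star(\Lambda)\star w - p^\star(\Lambda)\star w\big)_i = \big(R^\star(\lambda_i) - p^\star(\lambda_i)\big)\star w_i$. Writing $\Theta\star(\,\cdot\,)$ componentwise and using the definition of $\|\cdot\|_\star$ as an $L_2$ norm on $[s,b]$, one gets $\| \cdot \|_\star^2(s) = \sum_{i=1}^N \|(R^\star(\lambda_i)-p^\star(\lambda_i))\star w_i\|_\star^2(s)$. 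Now I would bound each summand: since $R^\star(\lambda_i)-p^\star(\lambda_i)$ is a scalar element whose $\Theta\star$-transform is an analytic function, the induced-norm bound $\|X\star v\|_\star(s)\le \|X\|_\star(s)\,\|v\|_\star(s)$ from Theorem~\ref{thm:starmtxnorm} (scalar case) gives $\|(R^\star(\lambda_i)-p^\star(\lambda_i))\star w_i\|_\star(s) \le \|R^\star(\lambda_i)-p^\star(\lambda_i)\|_\star(s)\,\|w_i\|_\star(s)$, hence the sum is $\le \big(\max_i \|R^\star(\lambda_i)-p^\star(\lambda_i)\|_\star(s)\big)^2 \sum_i \|w_i\|_\star^2(s) = \big(\max_i \|R^\star(\lambda_i)-p^\star(\lambda_i)\|_\star(s)\big)^2 \|w\|_\star^2(s)$. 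Finally, $\|w\|_\star(s) = \|Q^H\star\tilde v\delta\|_\star(s) = \|\tilde v\delta\|_\star(s)$ by Lemma~\ref{lemma:unitary} again. Taking square roots, dividing by $\|\tilde v\delta\|_\star(s)$, and then minimizing over $p^\star\in\mathcal{P}_n^\star$ on the right-hand side (the left-hand side already being the minimum value) yields the stated bound.

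A few technical points need care. One must check that $R^\star(\Lambda) = Q^H{}^{-\star}\star\cdots$ really equals the $\star$-resolvent of $\Lambda$, i.e.\ that the series $\sum_j \Lambda^{\star j}$ converges and that the conjugation by $Q$ commutes with the infinite sum; this follows because each $\star$-power map is continuous in the relevant topology on $\mathcal{A}_\Theta$ and the partial sums converge (as noted after the definition of $R^\star$), so one can argue by truncation and a limiting argument. One must also ensure that $\Theta\star v, \Theta\star w$ lie in $\mathcal{A}_\Theta^{N\times1}(\mathcal{I})$ so that Lemmas~\ref{lemma:unitary} and Theorems~\ref{thm:starvecnorm}--\ref{thm:starmtxnorm} apply to $v = R^\star(\Lambda)\star w - p^\star(\Lambda)\star w$ and to $w$ itself; this holds since $w = Q^H\tilde v$ is in $\mathcal{A}^{N\times1}$ with analytic $\Theta\star$-transform, and $\mathcal{A}_\Theta$ is $\star$-closed. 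I expect the main obstacle to be precisely the justification that conjugating the $\star$-resolvent series by $Q$ is legitimate, i.e.\ $R^\star(A) = Q\star R^\star(\Lambda)\star Q^H$; everything else is a clean combination of the already-established unitarity and sub-multiplicativity. Once that identity is in hand, the proof is essentially the $\star$-analogue of the classical reduction of Krylov-subspace error bounds to a scalar min-max problem over the spectrum.
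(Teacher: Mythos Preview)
Your proposal is correct and follows essentially the same route as the paper: use \eqref{eq:powerdec} to conjugate $p^\star(A)$ and $R^\star(A)$ into $Q\star(\cdot)\star Q^H$, strip off $Q$ via Lemma~\ref{lemma:unitary} to obtain the equivalence, then bound by the eigenvalue max and use unitarity of $Q^H$ to replace $\|w\|_\star$ by $\|\tilde v\delta\|_\star$. The only cosmetic difference is that the paper applies the matrix sub-multiplicativity of Theorem~\ref{thm:starmtxnorm} directly to $\|(R^\star(\Lambda)-p^\star(\Lambda))\star w\|_\star$ and then passes from the diagonal matrix norm to the scalar maximum in one line, whereas you expand componentwise first and apply the scalar sub-multiplicativity to each entry---your version is slightly more explicit about why the diagonal-to-max step is valid, but the argument is the same.
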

\begin{proof}
    Every $\star$-polynomial can be expanded in the $\star$-monomial basis, i.e., $p^\star(x) = \alpha_0 \delta + \alpha_1 x + \dots + \alpha_n x^{\star n}$. Therefore, by using \eqref{eq:powerdec}, we get
    \begin{equation*}
        p^\star(A) = Q \star p^\star(\Lambda) \star Q^H, \quad R^\star(A) = Q \star R^\star(\Lambda) \star Q^H. 
    \end{equation*}
    Hence, by Lemma~\ref{lemma:unitary} and Theorem~\ref{thm:starmtxnorm}, for every fixed $s \in [a,b)$, we get
    \begin{align*}
        \| R^\star(A)\tilde{v} - p^\star(A)\tilde{v}  \|_\star(s) &= \| Q \star ( R^\star(\Lambda) - p^\star(\Lambda)) \star Q^H \star \tilde{v}\delta  \|_\star(s) \\
        &= \| ( R^\star(\Lambda) - p^\star(\Lambda)) \star Q^H \star \tilde{v}\delta  \|_\star(s) \\
        &\leq \| R^\star(\Lambda) - p^\star(\Lambda) \|_\star(s) \; \| Q^H \star \tilde{v}\delta  \|_\star(s) \\
        &\leq \| R^\star(\Lambda) - p^\star(\Lambda) \|_\star(s) \; \| \tilde{v}\delta  \|_\star(s) \\
        &\leq \max_{i=1,\dots,N} \| R^\star(\lambda_i) - p^\star(\lambda_i) \|_\star(s) \; \| \tilde{v}\delta  \|_\star(s),
    \end{align*}
    concluding the proof.
\end{proof}

Now we can prove the main result of this paper. \subsection{Proof of Theorem~\ref{thm:main}}
Denote $p^\star(x) = \sum_{k=0}^n \alpha_k x^{\star k}$. By Lemma~\ref{lemma:npower} and Theorem~\ref{thm:error}, for $s \in [a,b)$ we get 
\begin{align*}
    \| R^\star(\lambda_i) - p^\star(\lambda_i)  \|_\star^2(s) &= \int_s^b \left| \sum_{k=0}^\infty \frac{(L_i(\tau, s))^k}{k!} - \sum_{k=0}^n \alpha_k\frac{(L_i(\tau, s))^k}{k!} \right|^2 \, \text{d}\tau \\
        &= \int_s^b \left| \exp(L_i(\tau, s)) - p(L_i(\tau, s)) \right|^2 \, \text{d}\tau,
\end{align*}
where $L_i(\cdot,s)$ is the primitive of $\tilde{\lambda}_i(\cdot)$ so that $L_i(s,s)=0$ and $p(t) = \sum_{k=0}^n \frac{\alpha_k}{k!}t^k$ is a (usual) polynomial. 
    Note that $L_i(\tau,s)$ is a real function with values in the interval
    \[ \mathcal{J}_i(s) = \left[\min_{\tau \in [s, b]} L_i(\tau,s), \max_{\tau \in [s, b]} L_i(\tau,s)\right]. \]
    Therefore, 
    \begin{equation*}
        \| R^\star(\lambda_i) - p^\star(\lambda_i)  \|_\star(s) \leq E_n(\mathcal{J}_i(s)) \sqrt{(b-s)}.
    \end{equation*}
        Now, by Theorem~\ref{thm:error} since $\| \tilde{v}\delta \|_\star(s) = \sqrt{b-s}$, we get
    \begin{align*}
            \left(\int_s^b | U(\tau, s)\tilde{v} - (\Theta \star q^\star(A))(\tau, s) \tilde{v} |^2\right)^{\frac{1}{2}} &= \| R^\star(A)\tilde{v} - q^\star(A)\tilde{v}  \|_\star(s) \\
             &\leq E_n(\mathcal{J}_i(s)) (b-s).
    \end{align*}
   Since the intervals $\mathcal{J}_i(s)$ are contained in $\mathcal{J}$ for every $s \in [a,b)$, 
    by the classical Bernstein's Theorem (see, e.g., \cite[page~91]{meinardus}), there exist constant $M>0$ and $0<\rho<1$ such that $E_n(\mathcal{J})(b-a) \leq M \rho^{n+1}$. Setting $s=a$ concludes the proof.

\section{Conclusion}\label{sec:conc}
The results presented are a first step in the direction of a new approach for the analysis of $\star$-product approximations of the solution of linear non-autonomous ordinary differential equations. The error analysis can affect the study of the related analytic expression and symbolic algorithms \cite{BonGis2020,GiPo20} as well as their numerical counterparts \cite{BoPoVB23} opening the way to the use of efficient Krylov subspace methods. Moreover, they can open the way to the analysis of the \emph{localization} (or \emph{decay}) phenomenon of the time-ordered exponential \cite{bensim,GiLuThJa15} by extending the techniques presented, e.g., in \cite{BenBoi14}. 
Finally, future works will try to derive more refined bounds for the error than the one in Theorem~\ref{thm:main}. This will be possible by using more information on the eigecurves' behavior.


\end{document}